\newcommand{\abs}[1]{\left\lvert#1\right\rvert}
\newcommand{\kh}[1]{\left(#1\right)}%
\newcommand{\zkh}[1]{\left[#1\right]}%
\newcommand{\dkh}[1]{\left\{#1\right\}}%
\newtheorem{theorem}{Theorem}
\newtheorem{lem}[theorem]{Lemma}
\newtheorem*{conj}{Conjecture}
\begin{document}

\title[Minimal hypersurface in $\mathbb H^5$]{Complete minimal hypersurfaces in $\mathbb H^5$ with constant scalar curvature and zero Gauss-Kronecker curvature}

\author[Qing Cui]{Qing Cui }
\address[Qing Cui]{School of Mathematics,
Southwest Jiaotong University, 611756
Chengdu, Sichuan, China}

\author[Boyuan Zhang]{Boyuan Zhang }
\address[Boyuan Zhang]{School of Mathematics,
Southwest Jiaotong University, 611756
Chengdu, Sichuan, China}




\email{cuiqing@swjtu.edu.cn, \ by$\_$zhang@my.swjtu.edu.cn}

 \begin{abstract}
We show that any complete minimal hypersurface in the five-dimensional hyperbolic space $\mathbb H^5$, endowed with constant scalar curvature and vanishing Gauss-Kronecker curvature, must be totally geodesic.  Cheng-Peng \cite{ChePen24} recently conjecture that  any complete minimal hypersurface with constant scalar curvature in $\mathbb H^4$ is totally geodesic. Our result partially confirms this conjecture in five dimensional setting.
\end{abstract}
\maketitle

\section{Introduction}
Let $\mathbb Q^3(c)$ be the three dimensional space form with constant sectional curvature $c\in \mathbb R$.
For  a complete minimal surface $M$ in $\mathbb Q^3(c)$ with constant Gaussian curvature $K$ and principal curvatures $\lambda_1$ and $\lambda_2$, the minimality condition combined with Gauss equation gives
\begin{align*}
\lambda_1+\lambda_2=0, \quad K=c+\lambda_1\lambda_2.
\end{align*}
This implies that both $\lambda_1$ and $\lambda_2$ are constant, hence $M$ is isoparametric. Furthermore, up to scaling $c$, $M$ must be isometric to one of the following four surfaces:
\begin{itemize}
\item The totally geodesic plane $\mathbb R^2$ in $\mathbb R^3$;
\item The totally geodesic sphere $\mathbb S^2$ in round 3-sphere $\mathbb S^3$;
\item The Clifford torus $\mathbb S^1\kh{\frac{1}{\sqrt{2}}}\times\mathbb S^1\kh{\frac{1}{\sqrt{2}}}$ in $\mathbb S^3$;
\item  The totally geodesic hyperbolic plane $\mathbb H^2$ in three dimensional hyperbolic space $\mathbb H^3$. 
\end{itemize}

For higher dimensions,  mathematicians 
investigate complete minimal hypersurfaces with constant scalar curvature in space forms $\mathbb Q^{n+1}(c)$. For $c>0$, up to scaling $c$, $\mathbb Q^{n+1}(c)$ is isometric to $\mathbb S^{n+1}$. Complete, especially closed, minimal hypersurfaces with constant scalar curvature in $\mathbb S^{n+1}$ have been studied intensively by Simons \cite{Sim68}, Chern-do Carmo-Kobayashi \cite{ChedoCKob70},
 Lawson \cite{Law69}, Peng-Terng \cite{PenTer83} and many other authors. Mathematicians conjecture that:
{\it A closed minimal hypersurface with constant scalar curvature in  $\mathbb S^{n+1}$ must be isoparametric.}
 This conjecture is commonly referred to as {\it the strong version of the Chern conjecture}, which 
 was proved by Chang \cite{Cha93} for $n=3$ and has been extensively investigated by many mathematicians over the past three decades.

For $c\le 0$, up to scaling $c$, $\mathbb Q^{n+1}(c)$ is isometric to either $\mathbb R^{n+1}$ or $\mathbb H^{n+1}$. In these two space forms, unlike the spherical  case, as far as we know, there are no known examples of complete minimal hypersurfaces with constant scalar curvature other than the totally geodesic ones. Can one obtain similar results as in dimension two by replacing the Gaussian curvature with the scalar curvature? In 1994, Cheng-Wan \cite{CheWan94} provided the first  affirmative answer for the case $c=0$ and $n=3$. Specifically, they proved that {\it a complete minimal hypersurface with constant scalar curvature in $\mathbb R^4$ must be totally geodesic}. 
Moreover, they gave a complete classification of constant mean curvature hypersurfaces in $\mathbb R^4$ with constant scalar curvature. 
Recently, Cheng-Peng \cite{ChePen24} studied complete minimal hypersurfaces in the hyperbolic space $\mathbb H^4(-1)$ with constant scalar curvature. They proved that the second fundamental form $A$ of such a hypersurface must satisfy $\abs{A}^2<\dfrac{21}{29}$. Similar to the two-dimensional case, the authors   conjectured the following:
\begin{conj}[\cite{ChePen24}]
A complete minimal hypersurfaces with constant scalar curvature in
the hyperbolic space $\mathbb H^4$ is totally geodesic.
\end{conj}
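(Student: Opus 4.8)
The plan is to work with the squared norm of the second fundamental form $S:=\abs{A}^2$ and to show that the constant scalar curvature hypothesis forces $S\equiv 0$. For a minimal hypersurface $M^3$ in $\mathbb H^4(-1)$ the Gauss equation gives $R=-6-S$, so constant scalar curvature is equivalent to $S$ being a nonnegative constant; the goal is $S=0$. Writing $\lambda_1,\lambda_2,\lambda_3$ for the principal curvatures, minimality gives $\sum_i\lambda_i=0$ and $\sum_i\lambda_i^2=S$, and Newton's identities then express every symmetric function of the $\lambda_i$ in terms of $S$ and $f_3:=\sum_i\lambda_i^3=3\det A$; in particular $f_4:=\sum_i\lambda_i^4=\tfrac12 S^2$ is automatically constant. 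Thus the only unconstrained scalar invariant is $f_3$ (equivalently $\det A$), and the discriminant of the characteristic polynomial bounds it by $f_3^2\le \tfrac16 S^3$, with equality exactly where two principal curvatures coincide. The whole problem therefore reduces to understanding the single function $f_3$ on a manifold on which $S$, $\abs{\nabla A}^2$ and $f_4$ are all constant.

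First I would record the classical Simons identity. For a minimal hypersurface in a space form of curvature $c$ the cubic terms cancel and one obtains $\tfrac12\Delta S=\abs{\nabla A}^2+(nc-S)S$; specializing to $c=-1$, $n=3$ gives
\begin{equation}
\tfrac12\Delta S=\abs{\nabla A}^2-(3+S)S .
\end{equation}
Since $S$ is constant, $\Delta S=0$, and hence $\abs{\nabla A}^2=(3+S)S$ is itself a constant. This is the point where the hyperbolic case departs sharply from the spherical one: the zeroth-order term $-(3+S)S$ has an unfavourable sign, so this first identity is perfectly consistent with $S>0$ and cannot by itself produce a contradiction. All further information must come from higher-order identities.

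Next I would bring in the Cheng--Yau self-adjoint operator $\square f=-\sum_{i,j}h_{ij}f_{,ij}$ and compute $\square f_3$ (equivalently $\Delta f_3$), together with the constraint $\Delta f_4=0$ coming from $f_4=\tfrac12 S^2=\mathrm{const}$. Because $S$, $\abs{\nabla A}^2$ and $f_4$ are all constant, these identities should express $\square f_3$ as a polynomial in $S$ and $f_3$ plus controlled gradient and Hessian remainders; the remainders can in turn be pinned down using $\Delta\abs{\nabla A}^2=0$ together with a Bochner--Ricci identity for $\nabla^2 A$. Since $S$ is constant the Gauss equation bounds the sectional, hence Ricci, curvature of $M$ from below, so the Omori--Yau generalized maximum principle applies. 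Evaluating the resulting inequality along sequences realizing $\sup f_3$ and $\inf f_3$ (where $\nabla f_3\to0$ and the sign of $\Delta f_3$ is controlled) would yield pointwise algebraic inequalities relating $S$ and the extreme values of $f_3$, in the spirit of the bound $S<\tfrac{21}{29}$ of Cheng--Peng \cite{ChePen24}; the aim is to sharpen these inequalities until the only surviving possibility is $S=0$.

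The step I expect to be the main obstacle is precisely this last sharpening. The maximum-principle inequalities extracted from $\square f_3$ are quadratic in $f_3$ with coefficients depending on $S$, and the unfavourable hyperbolic sign tends to leave an interval of admissible positive values of $S$ rather than excluding them outright — this is exactly why \cite{ChePen24} obtains a bound rather than a vanishing theorem. Closing the gap appears to demand a genuinely new ingredient: either a divergence-structure or integral identity upgrading the pointwise estimate to a global one, or an auxiliary test function adapted to the two-distinct-principal-curvature locus $f_3^2=\tfrac16 S^3$, analogous to the extra algebraic identity that becomes available in the more rigid setting (constant scalar curvature together with vanishing Gauss--Kronecker curvature in $\mathbb H^5$) treated in the present paper. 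Absent such an ingredient, the pointwise analysis of $f_3$ alone seems insufficient to rule out all $S>0$.
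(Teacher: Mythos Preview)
The statement you are attempting to prove is presented in the paper as an \emph{open conjecture} of Cheng--Peng, not as a theorem; the paper contains no proof of it. The paper's own contribution is Theorem~\ref{mainthm}, which treats the different (and more rigid) situation of $\mathbb H^5$ under the additional hypothesis of vanishing Gauss--Kronecker curvature. So there is no ``paper's own proof'' to compare your proposal against.

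As for the proposal itself: you have correctly set up the framework (Simons identity, the constancy of $\abs{\nabla A}^2$, the Omori--Yau principle applied to $f_3$), and you have also correctly identified the gap. Your final paragraph is an honest admission that the argument does not close: the maximum-principle inequalities one extracts from $\Delta f_3$ and $\Delta f_4$ leave a residual interval of admissible $S>0$, which is precisely why Cheng--Peng obtain only the bound $S<\tfrac{21}{29}$ rather than $S=0$. Nothing in your outline supplies the missing ingredient, and you say so yourself. In short, this is a reasonable reconnaissance of the problem, but it is not a proof, and since the conjecture is open, none was expected.
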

The results of Cheng-Peng  \cite{ChePen24} strongly support the above conjecture. Note that if the above conjecture holds, it can be viewed as a Bernstein-type theorem in hyperbolic space. It was shown by do Carmo-Lawson \cite{doCLaw83} that, in hyperbolic space, the Bernstein theorem is true for all dimensions, which  contrasts with the Euclidean case (where the Bernstein theorem is valid only for $n\le 7$). A natural question arises: {\it Is the above conjecture still true for higher dimensions ($n\ge 4$)?} 

However, in higher dimensions,  the minimality and the constant scalar curvature conditions seem insufficient or extremely challenging  to ensure that the hypersurface is totally geodesic. Observe that  every totally geodesic hypersurface must have zero Gauss-Kronecker curvature. Therefore, in this paper, we focus on the special case where $n=4$ and the hypersurface has zero Gauss-Kronecker curvature. Under these assumptions, we obtain the following result:
\begin{theorem}\label{mainthm}
Let $M$ be a complete minimal hypersurface in $\mathbb H^5$ with constant scalar curvature and zero Gauss-Kronecker curvature, then $M$ is totally geodesic.
\end{theorem}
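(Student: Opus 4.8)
The idea is to use that the vanishing Gauss--Kronecker curvature forces a nonzero relative nullity everywhere, and then to run the relative--nullity/Riccati machinery in the negatively curved ambient $\mathbb H^5$, where completeness of the leaves degenerates the second fundamental form. First the algebraic reductions: since $M$ is minimal, $\operatorname{tr}A=0$, so by the Gauss equation the scalar curvature of $M$ equals $-12-\abs{A}^{2}$, and the constant scalar curvature hypothesis becomes \emph{$S:=\abs{A}^{2}$ is constant}; in particular $\abs{A}\equiv\sqrt S$ is bounded, and Simons' identity gives $\Delta A=-(S+4)A$ and $\abs{\nabla A}^{2}=S(S+4)$. The vanishing Gauss--Kronecker curvature is $\det A\equiv 0$; together with $\operatorname{tr}A=0$, Newton's identities show that the characteristic polynomial of $A$ is $t\kh{t^{3}-\tfrac S2\,t-\tfrac13\sigma}$ with $\sigma:=\operatorname{tr}(A^{3})$, $\abs{\sigma}\le S^{3/2}/\sqrt6$. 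Hence $0$ is everywhere a principal curvature, and since $\operatorname{rank}A=1$ is incompatible with $\operatorname{tr}A=0$, the relative nullity index $\nu=\dim\ker A$ lies in $\dkh{1,2,4}$, with $\nu=4$ only where $A=0$. Assume $M$ is not totally geodesic; then $S>0$ everywhere and $\nu\in\dkh{1,2}$, and we seek a contradiction.

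Set $\nu_0=\min_M\nu\in\dkh{1,2}$. On the open set $W=\dkh{\nu=\nu_0}$ the relative nullity distribution $\mathcal D$ is smooth and integrable; the Codazzi equation makes its leaves totally geodesic in $M$, and since $A$ vanishes on $\mathcal D$ they are totally geodesic in $\mathbb H^5$ as well, hence open arcs of complete geodesics of $\mathbb H^5$ (lines when $\nu_0=1$, totally geodesic $\mathbb H^2$'s when $\nu_0=2$). Because $M$ is complete, the completeness theorem for relative nullity foliations (Ferus) guarantees that the leaves through points of $W$ are complete. Fix such a complete leaf with unit tangent field $\xi\in\mathcal D$. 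Along it, the splitting tensor $C=C_\xi$ of $\mathcal D$ satisfies the Riccati equation $\nabla_\xi C=C^{2}-\operatorname{Id}$ on $\mathcal D^{\perp}$, the $-\operatorname{Id}$ being the contribution of the sectional curvature $-1$ of $\mathbb H^5$; the Codazzi equation gives $\nabla_\xi\!\kh{A|_{\mathcal D^{\perp}}}=\kh{A|_{\mathcal D^{\perp}}}C$ (so $C=\kh{A|_{\mathcal D^{\perp}}}^{-1}\nabla_\xi\!\kh{A|_{\mathcal D^{\perp}}}$, as $A|_{\mathcal D^{\perp}}$ is invertible on $W$); and differentiating $\operatorname{tr}A=0$ and $\operatorname{tr}(A^{2})=S$ along $\xi$ yields $\operatorname{tr}(AC)=\operatorname{tr}(A^{2}C)=0$.

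Completeness of the leaf now contradicts $S>0$. When $\nu_0=2$, this is immediate: with respect to an orthonormal basis of $\mathcal D^{\perp}$, $A|_{\mathcal D^{\perp}}$ is a traceless symmetric $2\times2$ matrix of constant norm $\sqrt S$, so a direct computation gives $C=\phi'\,J$, a multiple of the rotation generator $J$, where $\phi$ is the rotation angle of $A|_{\mathcal D^{\perp}}$; then $\nabla_\xi C=C^{2}-\operatorname{Id}$ splits into its $J$-- and $\operatorname{Id}$--parts and forces simultaneously $\phi''=0$ and $(\phi')^{2}+1=0$, which is impossible. When $\nu_0=1$ one works on the three--dimensional bundle $\mathcal D^{\perp}$: the Riccati equation, being defined for all time, keeps the eigenvalues of $C$ bounded (a $\coth$--type eigenvalue would escape in finite time), and integrating $\nabla_\xi A=AC$ then forces every principal curvature $\mu_i(t)$ of $A|_{\mathcal D^{\perp}}$ to decay to $0$ as $\abs t\to\infty$ (in the simplest case $\mu_i(t)=\mu_i(0)\cosh a_i/\cosh(t-a_i)$; exponential growth is excluded by $\abs A\le\sqrt S$), so that $S=\sum_i\mu_i^{2}\to0$ along the leaf, contradicting that $S$ is a positive constant. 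Therefore $S=0$ and $M$ is totally geodesic.

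I expect two points to demand the most care. First, the completeness of the relative nullity leaves through minimal--nullity points: one must rule out that such a leaf runs into the degenerate locus $\dkh{\nu>\nu_0}$ — which is $\dkh{\sigma=0}$ when $\nu_0=1$ — in finite length, i.e.\ show that $\nu$ stays equal to $\nu_0$ along the leaf; this is precisely the delicate content of the Ferus--type completeness statement in a space form. Second, in the case $\nu_0=1$ the decay analysis must be carried out with the full matrix system $C'=C^{2}-\operatorname{Id}$, $A'=AC$ on $\mathcal D^{\perp}\cong\mathbb R^{3}$ under the constraints $\operatorname{tr}A=\operatorname{tr}(AC)=\operatorname{tr}(A^{2}C)=0$, without assuming $C$ symmetric or simultaneously diagonalizable with $A$; this is a genuine piece of three--dimensional linear algebra, and it is presumably what pins the theorem to $\dim M=4$. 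Alternatively, the same leaf analysis can be used only to conclude that $\sigma=\operatorname{tr}(A^{3})$ is constant, after which $M$ is isoparametric; since no isoparametric hypersurface of $\mathbb H^5$ is at once minimal and non--totally--geodesic, Cartan's classification then finishes the proof.
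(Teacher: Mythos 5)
Your strategy is genuinely different from the paper's: the paper applies the Omori--Yau maximum principle to $f_3=\operatorname{tr}A^3$ and plays the Peng--Terng identities for $\Delta f_3,\Delta f_4$, $\abs{\nabla A}^2$, $\abs{\nabla^2 A}^2$ against the linear systems satisfied by the limits $\hat h_{iik},\hat h_{iikl}$, splitting into cases according to the value of $\sup f_3$; you instead exploit that $\mathcal K\equiv 0$ forces relative nullity $\nu\ge 1$ everywhere and run the Ferus completeness theorem together with the splitting-tensor Riccati equation $C'=C^2-\operatorname{Id}$ along complete nullity leaves. The setup is sound: $\nu_0\in\{1,2\}$ on a non--totally--geodesic $M$, the leaves through minimal-nullity points are complete, and your $\nu_0=2$ branch does close (the identity $C=\phi' J$ follows from $\operatorname{tr}C=0$, $\operatorname{tr}(AC)=0$ and the symmetry of $AC=\nabla_\xi A$, after which the $J$-- and $\operatorname{Id}$--components of the Riccati equation give $(\phi')^2+1=0$).

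The genuine gap is the $\nu_0=1$ branch, which is the main case and which you explicitly leave as ``a genuine piece of three-dimensional linear algebra.'' The computation you exhibit ($\mu_i(t)=\mu_i(0)\cosh a_i/\cosh(t-a_i)$) only treats the situation where $C$ is real-diagonalizable, has no eigenvalues $\pm 1$, and commutes with $A\vert_{\mathcal D^\perp}$; none of these is automatic, since $C$ is in general a non-symmetric $3\times 3$ matrix that may have complex eigenvalues or nontrivial Jordan structure, and ``the eigenvalues of $C$ stay bounded, hence the eigenvalues of $A$ decay'' is not a valid inference for the coupled non-commuting system $C'=C^2-\operatorname{Id}$, $A'=AC$. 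The gap is fixable, and more cleanly than you suggest: linearize the Riccati equation by $C=-Y'Y^{-1}$ with $Y''=Y$, $Y(0)=\operatorname{Id}$, $Y'(0)=-C(0)$; then $(AY)'=ACY+AY'=0$, so $A(t)=A(0)\kh{\cosh t\,\operatorname{Id}-\sinh t\,C(0)}^{-1}$ in a parallel frame, and a case check on the spectrum of $C(0)$ (real eigenvalues outside $[-1,1]$ make $Y$ singular in finite time and $\abs{A}$ blow up; eigenvalues $\pm1$ make $\abs{A}$ grow like $e^{\abs{t}}$ in one direction since $A(0)$ is invertible; everything else forces $A(t)\to 0$) contradicts $\abs{A}\equiv\sqrt S>0$ along a complete leaf. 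Note, however, that this repaired argument uses neither minimality nor $\dim M=4$ in any essential way (only that $\abs{A}$ is a nonzero constant and $\ker A\neq 0$ everywhere), so your remark that the $3\times 3$ analysis ``pins the theorem to $\dim M=4$'' is a warning sign that should be resolved before trusting the argument: either it proves a substantially stronger statement than the paper's, or a hypothesis is being used implicitly. Finally, the alternative ending via ``$\sigma$ constant $\Rightarrow$ isoparametric'' is asserted without justification; constancy of $f_1,\dots,f_4$ does give isoparametricity (as in the paper's Case (2)), but you have not shown the leaf analysis yields constancy of $\sigma$.
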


This paper is organized as follows. Section 2 introduces the fundamental notations and revisits essential formulas and  results, with particular emphasis on the Omori-Yau maximum principle. Subsequently, Section 3 presents the detailed proof of Theorem \ref{mainthm}.

\vspace{1cm}
\section{Preliminary}
In this section, we establish the necessary notations and present several well-known fundamental formulas and results.

Let $M$ be a minimal hypersurface in $\mathbb H^5$ with second fundamental form $A$. We choose a local orthonormal frame $\dkh{e_i}_{i=1}^5$ in $\mathbb H^5$ such that $e_1, \cdots, e_4$ are tangent to $M$. Let $\dkh{\omega^i}_{i=1}^5$ denote the dual frame of $\dkh{e_i}_{i=1}^5$.  The second fundamental form $A$ can then be expressed as $A=\sum_{i,j=1}^4h_{ij}\omega^i\otimes \omega^j$, where $h_{ij}=h_{ji}$. 
Unless otherwise specified, all summations in this  paper range from 1 to 4.
The minimality of $M$ implies that the mean curvature $H$ vanishes, i.e., $H=\dfrac{1}{4}\sum_i h_{ii}=0$.
We define the following notations:
\begin{align*}
F_{,i}=\nabla_i F, \ \ F_{,ij}=\nabla_j\nabla_i F, \ \ h_{ijk}=\nabla_k h_{ij} \ \text{and} \  h_{ijkl}=\nabla_l\nabla_k h_{ij},
\end{align*}
where $\nabla_j$ is the covariant differentiation operator with respect to $e_j$. 
The Gauss equation,  Codazzi equation and  Ricci identities are given by (cf. \cite{ChedoCKob70}):
\begin{align}\label{Gausseq}
R_{ijkl}=&-\kh{\delta_{ik}\delta_{jl}-\delta_{il}\delta_{jk}}+h_{ik}h_{jl}-h_{il}h_{jk}.\\
h_{ijk}=&h_{ikj}.\label{Codazzieq}\\
h_{ijkl}-h_{ijlk}
=&\sum_m h_{im}R_{mjkl}+\sum_m h_{mj}R_{mikl}.\label{Ricciiden}
\end{align}
From the Gauss equation \eqref{Gausseq}, it follows that the scalar curvature $R_M$ of $M$ is given by:
$
R_M=\sum_{i,j}R_{ijij}=-12 -\abs{A}^2.
$
Consequently, $\abs{A}^2$ is constant if and only if the scalar curvature $R_M$ is constant.

Assume $A$ has principal curvature $\lambda_1,\cdots, \lambda_4$ at a given point. At this point, combining the Gauss equation \eqref{Gausseq} with  the Ricci identity \eqref{Ricciiden} yields
\begin{align}
h_{ijkl}-h_{ijlk}=\kh{\lambda_i-\lambda_j}\kh{-1+\lambda_i\lambda_j}\kh{\delta_{ik}\delta_{jl}-\delta_{il}\delta_{jk}}.\label{tijkl}
\end{align}

For each positive integer $k$, we denote by 
\begin{align}\label{f_k}
f_k={\rm tr} A^k,\quad \sigma_k=\sum_{i_1<\cdots<i_k}\lambda_{i_1}\cdots\lambda_{i_k}.
\end{align}
The functions $f_k$ and $\sigma_k$ are  globally well-defined smooth functions satisfying the following identities:
\begin{align}
\begin{cases}\label{f1234}
&f_1 = \sigma_1=4H=0, \\ 
&f_2 =\sigma_1^2-2\sigma_2=\abs{A}^2,\\
&f_3 = \sigma_1^3-3\sigma_1\sigma_2+3\sigma_3
=3\sigma_3,\\
&f_4 = \sigma_1^4-4\sigma_1^2\sigma_2
+4\sigma_1\sigma_3+2\sigma_2^2-4\sigma_4=\frac{1}{2}\abs{A}^4-4\mathcal{K},
\end{cases}
\end{align}
where $\mathcal{K}=\sigma_4=\lambda_1\lambda_2\lambda_3
\lambda_4$
is the Gauss-Kronecker curvature. Notably, 
\begin{align}\label{K=0}
\mathcal{K}\equiv 0 \ \ \text{if and only if}\ \ f_4\equiv\dfrac{1}{2}\abs{A}^4.
\end{align}
The following identities, due to Peng-Terng \cite{PenTer83}, hold:
\begin{align}
\Delta f_3=&-3\kh{4+\abs{A}^2}f_3+6\mathscr{C}, \label{Deltaf3}\\
\Delta f_4 =& -4\kh{4+\abs{A}^2}f_4 +4\kh{2\mathscr{A}+\mathscr{B}},\label{Deltaf4}
\end{align}
where 
\begin{align}\label{ABC}
\mathscr{A}=\sum_{i,j,k} \lambda_i^2 h_{ijk}^2,\quad \mathscr{B}=\sum_{i,j,k} \lambda_i\lambda_jh_{ijk}^2,\quad \mathscr{C}=\sum_{i,j,k}\lambda_ih_{ijk}^2.
\end{align}

If $M$ has constant scalar curvature, Simons \cite{Sim68} and Peng-Terng \cite{PenTer83} established the following identities:
\begin{align}
\abs{\nabla A}^2=&\sum_{i,j,k} h_{ijk}^2=\abs{A}^2\kh{\abs{A}^2+4 }.\label{nablaA}\\
\abs{\nabla^2 A}^2=&\sum_{i,j,k,l}h_{ijkl}^2=  \abs{A}^2\kh{\abs{A}^2+4}\kh{\abs{A}^2+11}+3\kh{\mathscr{A}-2\mathscr{B}}.
\label{nabla2A}
\end{align}

When $M$ is noncompact, a smooth function may not attain its extremum (maximum or minimum). To address this issue, we invoke the following {\it Omori-Yau Maximum Principle} ( see \cite{Omo67} and \cite{Yau75}).
\begin{lem}\label{Omori-Yaulem}
Let $M $ be a complete Riemannian manifold with sectional  curvature bounded from below (or above).
If a $\mathcal C^2$-function $f$ is bounded from above (or below) on $M $, then there exists a sequence   $\{p_{k}\}_{k=1}^{\infty} \subset{M^{n}}$ such that
\begin{enumerate}
\item $\lim\limits_{k \to \infty} f(p_{k})=\underset{M }{\sup}\ f  \quad (\text{or}\ \lim\limits_{k \to \infty} f(p_{k})=\underset{M }{\inf}\ f)$;
\item $\lim\limits_{k \to \infty}|\nabla f(p_{k})|=0$;
\item $\lim\limits_{k \to \infty}\sup\ \nabla_l\nabla_lf(p_{k})\leq 0  \quad (\text{or}\ \lim\limits_{k \to \infty}\inf\ \nabla_l\nabla_lf(p_{k})\geq 0)$, for  $l=1,  2,  \cdots, n$.
\end{enumerate}
\end{lem}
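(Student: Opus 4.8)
The plan is to prove the statement in the sup version (sectional curvature bounded below, $f$ bounded above); the inf version is then obtained by applying this case to $-f$. Write $f^{*}=\sup_M f<\infty$. Fix a base point $o\in M$, let $r(x)=d(o,x)$ be the Riemannian distance, and introduce the barrier function $\gamma=\kh{1+r^2}^{1/2}$. The heart of the argument is to perturb $f$ by a small multiple of $\gamma$ so that the perturbed function attains an \emph{honest} global maximum, and then to read off the three desired estimates at those maximum points as the perturbation parameter tends to zero. Concretely, for each $\epsilon>0$ set $f_\epsilon=f-\epsilon\gamma$. Since $f\le f^{*}$ while $\gamma(x)\to\infty$ as $r(x)\to\infty$, we have $f_\epsilon(x)\to-\infty$ at infinity; completeness (Hopf--Rinow, which makes closed metric balls compact) then guarantees that $f_\epsilon$ attains a global maximum at some point $p_\epsilon\in M$.

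First I would record the two analytic properties of $\gamma$ that drive everything. A direct computation gives $\nabla\gamma=\frac{r}{\sqrt{1+r^2}}\,\nabla r$, so $\abs{\nabla\gamma}=\frac{r}{\sqrt{1+r^2}}<1$, and
\begin{align*}
\nabla^2\gamma=\kh{1+r^2}^{-3/2}\,dr\otimes dr+\frac{r}{\sqrt{1+r^2}}\,\nabla^2 r.
\end{align*}
Assuming $\mathrm{Sec}\ge -K^2$, the Hessian comparison theorem bounds $\nabla^2 r$ from above (away from the cut locus) by $K\coth(Kr)\kh{g-dr\otimes dr}$; substituting and using that $\frac{r}{\sqrt{1+r^2}}K\coth(Kr)$ stays bounded for all $r\ge 0$, one obtains a constant $C=C(K)$ with $\nabla^2\gamma\le C\,g$ as bilinear forms. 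In particular $\nabla_l\nabla_l\gamma\le C$ for every index $l$.

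Next I would extract the estimates at the maximum point $p_\epsilon$. When $\gamma$ is smooth there, the first- and second-order conditions for a maximum of $f_\epsilon$ give $\nabla f(p_\epsilon)=\epsilon\,\nabla\gamma(p_\epsilon)$ and $\nabla^2 f(p_\epsilon)\le\epsilon\,\nabla^2\gamma(p_\epsilon)$, whence $\abs{\nabla f(p_\epsilon)}\le\epsilon$ and $\nabla_l\nabla_l f(p_\epsilon)\le\epsilon C$ for each $l$. To see that the supremum is approached, fix $\delta>0$ and pick $x_\delta$ with $f(x_\delta)>f^{*}-\delta$; since $p_\epsilon$ maximizes $f_\epsilon$,
\begin{align*}
f(p_\epsilon)\ge f_\epsilon(p_\epsilon)\ge f_\epsilon(x_\delta)=f(x_\delta)-\epsilon\gamma(x_\delta)>f^{*}-\delta-\epsilon\gamma(x_\delta).
\end{align*}
Letting $\epsilon\to 0$ with $x_\delta$ fixed gives $\liminf_{\epsilon\to0}f(p_\epsilon)\ge f^{*}-\delta$, and then $\delta\to0$ yields $f(p_\epsilon)\to f^{*}$. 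Choosing any sequence $\epsilon_k\to0$ and setting $p_k=p_{\epsilon_k}$ produces points with $f(p_k)\to f^{*}$, $\abs{\nabla f(p_k)}\le\epsilon_k\to0$, and $\limsup_k\nabla_l\nabla_lf(p_k)\le\limsup_k\epsilon_kC=0$, which are exactly the three asserted properties.

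I expect the one genuine obstacle to be the non-smoothness of $r$, hence of $\gamma$, along the cut locus of $o$: the maximum point $p_\epsilon$ need not be a smooth point, so the second-order maximum condition cannot be applied verbatim. The standard remedy is \emph{Calabi's trick}: replace $r$ near $p_\epsilon$ by the upper barrier $r_\sigma(x)=\sigma+d(q_\sigma,x)$, where $q_\sigma$ is a point a distance $\sigma$ along a minimizing geodesic from $o$ to $p_\epsilon$. For small $\sigma$ this barrier is smooth in a neighborhood of $p_\epsilon$, touches $r$ from above at $p_\epsilon$, and satisfies the same Hessian comparison bound; replacing $\gamma$ by the corresponding smooth upper barrier legitimizes the inequalities above in the barrier sense, which is all that is needed. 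With this technical point dispatched, the estimates go through uniformly in $\epsilon$, completing the proof.
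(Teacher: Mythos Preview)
The paper does not prove this lemma at all: it is simply quoted as the Omori--Yau maximum principle with references to \cite{Omo67} and \cite{Yau75}, and no argument is supplied. So there is no ``paper's own proof'' to compare against.

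That said, your proposal is a correct and standard proof of the Omori (Hessian) form of the maximum principle. The scheme --- perturb $f$ by $-\epsilon\sqrt{1+r^2}$, force a genuine interior maximum by properness, bound $\abs{\nabla\gamma}<1$ and $\nabla^2\gamma\le C\,g$ via Hessian comparison (which is exactly where the lower sectional-curvature bound enters), and use Calabi's trick to handle cut-locus points --- is essentially the modern textbook argument (as in Pigola--Rigoli--Setti). The three conclusions of the lemma follow immediately from the first- and second-order conditions at $p_\epsilon$ together with the squeeze $f(p_\epsilon)\ge f_\epsilon(x_\delta)$, just as you wrote. One minor remark: you might note explicitly that $\gamma$ is smooth at the base point $o$ (since $r^2$ is smooth there), so the only potential non-smoothness is on the cut locus, where your Calabi barrier handles it.
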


We also require an elementary algebraic lemma, which can be proven using the  method of  Lagrange  multipliers.
\begin{lem}\label{algebraiclem}
Let $a,b,c\in \mathbb R$ satisfy $a\le b\le c$ and $a+b+c=0$. Then, 
\begin{align*}
-\dfrac{1}{\sqrt{6}}\kh{a^2+b^2+c^2}^{3/2}\le a^3+b^3+c^3\le \dfrac{1}{\sqrt{6}}\kh{a^2+b^2+c^2}^{3/2},
\end{align*}
and the equalities hold as follows:
\begin{align*}
a^3+b^3+c^3=&-\dfrac{1}{\sqrt{6}}\kh{a^2+b^2+c^2}^{3/2} \ \ \text{if and only if}\ \ b=c=-\dfrac{a}{2},\\
a^3+b^3+c^3=& \dfrac{1}{\sqrt{6}}\kh{a^2+b^2+c^2}^{3/2} \ \ \text{if and only if}\ \ a=b=-\dfrac{c}{2},\\
a^3+b^3+c^3=&0 \ \ \text{if and only if} \ \ b=0 \ \ \text{and} \ \ a=-c.
\end{align*}
\end{lem}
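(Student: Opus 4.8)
The plan is to recast the claim as a constrained optimization problem and solve it by the method of Lagrange multipliers, exploiting homogeneity to reduce to a compact constraint set. First I would record the standard factorization $a^3+b^3+c^3-3abc = (a+b+c)(a^2+b^2+c^2-ab-bc-ca)$, whose right-hand side vanishes under the hypothesis $a+b+c=0$; hence $a^3+b^3+c^3 = 3abc$, and the asserted two-sided bound is equivalent to $\abs{3abc}\le \frac{1}{\sqrt6}\kh{a^2+b^2+c^2}^{3/2}$. This relation is homogeneous of degree $3$ in $(a,b,c)$ and holds trivially when $a=b=c=0$, so I may assume $s:=a^2+b^2+c^2>0$ and divide through by $s^{3/2}$. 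Since the scaling factor $1/\sqrt s$ is positive, it preserves both the constraint $a+b+c=0$ and the ordering $a\le b\le c$, and the problem reduces to determining the range of $g(a,b,c)=a^3+b^3+c^3$ on the set $S=\dkh{a+b+c=0}\cap\dkh{a^2+b^2+c^2=1}$.

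Next I would analyze $g$ on $S$, which is a circle and therefore compact, so $g$ attains its maximum and minimum there. At a constrained critical point the Lagrange condition $(3a^2,3b^2,3c^2)=\lambda(1,1,1)+2\mu(a,b,c)$ shows that each of $a,b,c$ solves the single quadratic equation $3x^2-2\mu x-\lambda=0$; a quadratic has at most two distinct roots, so at least two of $a,b,c$ coincide, and since $a=b=c$ would force the excluded origin, exactly one pair coincides. Imposing $a\le b\le c$ leaves precisely two candidates: the configuration $a=b=-\frac{c}{2}$, which after normalization is $\kh{-\frac{1}{\sqrt6},-\frac{1}{\sqrt6},\frac{2}{\sqrt6}}$ with $g=3abc=\frac{1}{\sqrt6}$, and the configuration $b=c=-\frac{a}{2}$, which is $\kh{-\frac{2}{\sqrt6},\frac{1}{\sqrt6},\frac{1}{\sqrt6}}$ with $g=-\frac{1}{\sqrt6}$. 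Hence $\max_S g=\frac{1}{\sqrt6}$ and $\min_S g=-\frac{1}{\sqrt6}$, and rescaling by $s^{3/2}$ gives the stated inequality for all admissible triples.

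Finally I would read off the equality cases. The upper (respectively lower) bound is attained exactly when the normalized triple equals the unique maximizer (respectively minimizer) identified above, that is, when $a=b=-\frac{c}{2}$ (respectively $b=c=-\frac{a}{2}$). For the vanishing case, since $a^3+b^3+c^3=3abc$, one has $a^3+b^3+c^3=0$ iff $abc=0$, i.e. some entry is zero; the ordering then forces the middle entry to vanish, because $a=0$ together with $0\le b\le c$ and $b+c=0$ yields $b=c=0$, while $c=0$ together with $a\le b\le0$ and $a+b=0$ yields $a=b=0$. Thus $b=0$ and $a=-c$, as claimed. The calculations here are routine; the only point requiring genuine care is the bookkeeping of the ordering $a\le b\le c$, which is what singles out one maximizer and one minimizer among the Lagrange candidates and pins down the exact equality configurations, especially the vanishing case. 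I expect this ordering analysis, rather than any delicate estimate, to be the main (and quite modest) obstacle.
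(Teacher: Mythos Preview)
Your proposal is correct and follows exactly the approach the paper indicates, namely the method of Lagrange multipliers; the paper itself gives no details beyond that hint, and your write-up fills them in accurately, including the homogeneity reduction, the two-equal-coordinates conclusion from the quadratic Lagrange condition, and the ordering analysis for the equality and vanishing cases.
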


\vspace{1cm}
\section{Proof of the main theorem}
The main idea of the proof of the main theorem is as follows: assuming 
$M$ is not totally geodesic, we consider the function $f_3$  and derive a contradiction using Lemma \ref{Omori-Yaulem}. To ensure the validity of this approach, it is essential to verify that all identities involving the principal curvatures, $h_{ijkl}$ and $h_{ijkl}$ remain applicable when passing to a subsequence of the sequence provided by Lemma \ref{Omori-Yaulem}. 

\begin{lem}\label{subsequencelem}
Let $M$ be a complete minimal hypersurface in $\mathbb H^5$ with constant scalar curvature,  then there exists a sequence of $\{p_{k}\}_{k=1}^{\infty} \subset{M }$ such that
\begin{enumerate}
\item $\lim\limits_{k \to \infty} f_3(p_{k})=\underset{M }{\sup}\ f_3  \quad (\   \text{or }\   \lim\limits_{k \to \infty} f_3(p_{k})=\underset{M }{\inf}\ f_3)$;
\item $\lim\limits_{k \to \infty}|\nabla f_3(p_{k})|=0$;
\item $\lim\limits_{k \to \infty}\sup\ \nabla_l\nabla_l f_3(p_{k})\leq 0  \quad (\   \text{or }\  \lim\limits_{k \to \infty}\inf\ \nabla_l\nabla_l f_3(p_{k})\geq 0)$, for  $l=1,  2,  \cdots, n$.
\end{enumerate}
Furthermore, there exists a subsequence $\dkh{p_{k_m}}_{m=1}^\infty$ of  $\dkh{p_k}_{k=1}^\infty$, such that   the limits 
\begin{align*}
\hat{\lambda_i}=\lim_{m\to \infty} \lambda_i\kh{p_{k_m}}, \quad \hat{h_{ijk}}=\lim_{m\to \infty} h_{ijk}\kh{p_{k_m}},\quad \hat{h_{ijkl}}=\lim_{m\to \infty} h_{ijkl}\kh{p_{k_m}}
\end{align*}
exist for all $i,j,k,l$.  Moreover, the limits $\hat{\lambda_i}, \hat{h_{ijk}}, \hat{h_{ijkl}}$ satisfy the same identities listed in Section 2 as $\lambda_i, h_{ijk}, h_{ijkl}$.
\end{lem}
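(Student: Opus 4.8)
The plan is to apply the Omori--Yau maximum principle (Lemma \ref{Omori-Yaulem}) to the function $f_3 = \mathrm{tr}\,A^3$ and then extract a convergent subsequence along which the discrete data stabilize. First I would verify that the hypotheses of Lemma \ref{Omori-Yaulem} are met: since $M$ has constant scalar curvature, $\abs{A}^2$ is constant by the Gauss equation, hence by \eqref{Gausseq} the sectional curvatures of $M$ are bounded (each $R_{ijij} = -1 + \lambda_i\lambda_j$ with $\sum\lambda_i^2$ bounded), so in particular the sectional curvature is bounded from below. Moreover $\abs{f_3}^2 \le C \abs{A}^3$ pointwise, so $f_3$ is a bounded $\mathcal{C}^2$-function on $M$ (smoothness of $f_3 = \mathrm{tr}\,A^3$ follows since it is a polynomial in the $h_{ij}$, even though the individual $\lambda_i$ need not be smooth). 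Applying the lemma (in both the $\sup$ and $\inf$ versions) immediately yields a sequence $\{p_k\}$ satisfying conclusions (1)--(3).

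Next I would extract the subsequence. The key observation is that all the relevant quantities are uniformly bounded along $\{p_k\}$: the $\lambda_i(p_k)$ are bounded because $\sum_i \lambda_i^2 = \abs{A}^2$ is a constant; the $h_{ijk}(p_k)$ are bounded because $\sum_{i,j,k} h_{ijk}^2 = \abs{\nabla A}^2 = \abs{A}^2(\abs{A}^2+4)$ is constant by \eqref{nablaA}; and the $h_{ijkl}(p_k)$ are bounded because $\sum_{i,j,k,l} h_{ijkl}^2 = \abs{\nabla^2 A}^2$ is, by \eqref{nabla2A}, equal to a constant plus $3(\mathscr{A} - 2\mathscr{B})$, and $\mathscr{A}, \mathscr{B}$ are themselves bounded by $\abs{A}^2 \abs{\nabla A}^2$ (both constant). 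One subtlety here: to speak of $\lambda_i(p_k)$ and $h_{ijk}(p_k)$, $h_{ijkl}(p_k)$ as numbers I should, at each $p_k$, choose an orthonormal frame diagonalizing $A(p_k)$ (with eigenvalues in increasing order, say) and express everything in that frame; the boundedness statements above are frame-independent norms, so this is legitimate. Then a diagonal/Bolzano--Weierstrass argument over the finitely many indices $i,j,k,l \in \{1,\dots,4\}$ produces a subsequence $\{p_{k_m}\}$ along which every $\lambda_i(p_{k_m})$, $h_{ijk}(p_{k_m})$, $h_{ijkl}(p_{k_m})$ converges, to limits $\hat\lambda_i, \hat h_{ijk}, \hat h_{ijkl}$.

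Finally I would argue that the limiting data inherit the identities of Section 2. This is essentially a matter of passing to the limit in algebraic identities: the symmetry $h_{ijk} = h_{ikj}$ (Codazzi, \eqref{Codazzieq}), the commutation relation \eqref{tijkl} for $h_{ijkl} - h_{ijlk}$ (which is purely algebraic in $\lambda_i, \delta_{ij}$), the trace identities \eqref{f1234}, and the pointwise relations \eqref{nablaA}, \eqref{nabla2A}, \eqref{Deltaf3}, \eqref{Deltaf4} all survive because each side is a continuous (polynomial) function of the data that converges. For the relations that originally came from applying $\Delta$ or $\nabla$ — e.g.\ \eqref{Deltaf3} reads $\Delta f_3 = \sum_l \nabla_l\nabla_l f_3$, and $\nabla_l\nabla_l f_3$ at $p_{k_m}$ is itself a polynomial expression in $\lambda_i, h_{ijk}, h_{ijkl}$ evaluated in the adapted frame — the limit of the expression equals the corresponding expression in the limiting data. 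Combined with conclusion (3) of the maximum principle, which tells us $\limsup_m \nabla_l\nabla_l f_3(p_{k_m}) \le 0$ for each $l$ and hence $\lim_m \Delta f_3(p_{k_m}) \le 0$, this packages exactly the information needed in Section 3.

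I expect the main obstacle to be the bookkeeping around frames: the principal curvatures $\lambda_i$ are not globally smooth functions (they can fail to be differentiable at points where eigenvalues collide), so care is needed in stating what ``$h_{ijk}(p_k)$ in principal coordinates'' means and in ensuring that the identities of Section 2 — several of which are stated ``at a point, in a frame diagonalizing $A$'' — transfer correctly to the limit. The resolution is that every identity we need is either frame-independent (a norm or a trace) or is an algebraic identity valid in any frame adapted to $A(p)$, so choosing adapted frames at each $p_{k_m}$, passing to a further subsequence so that these frames converge as well (they live in the compact orthogonal group $O(4)$), and then taking limits makes everything rigorous; the convergence of frames is the one extra compactness input beyond Bolzano--Weierstrass on the scalar data.
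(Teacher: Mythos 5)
Your proposal is correct and follows essentially the same route as the paper: apply the Omori--Yau principle to $f_3$ (using constancy of $\abs{A}^2$ for the curvature bound and boundedness of $f_3$), deduce boundedness of $\lambda_i$, $h_{ijk}$, $h_{ijkl}$ from \eqref{nablaA}, \eqref{nabla2A} and the boundedness of $\mathscr{A},\mathscr{B}$, extract a convergent subsequence by Bolzano--Weierstrass, and pass to the limit in the identities by continuity. Your additional care about choosing adapted frames at each point and extracting a convergent subsequence of frames in $O(4)$ is a useful clarification the paper leaves implicit, but it does not change the argument.
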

\begin{proof}
Since $M$ has constant scalar curvature, $\abs{A}^2$ is a constant, and the sectional curvature of $M$ is bounded by the Gauss equation \eqref{Gausseq}. By the definition of $f_3$, $\abs{f_3}\le \dfrac{1}{\sqrt{3}}\abs{A}^3$ is also bounded. Consequently, Lemma \ref{Omori-Yaulem} applies to $f_3$, and items (1), (2), (3) hold. Since $\abs{A}^2=\sum_i\lambda_i^2$, $\lambda_1, \cdots, \lambda_n$ are bounded continuous functions. For all $i,j,k$, $h_{ijk}$ are bounded by \eqref{nablaA}, and $\mathscr{A}, \mathscr{B}, \mathscr{C}$ are bounded smooth functions by \eqref{ABC}. It follows from \eqref{nabla2A} that $h_{ijkl}$ are also bounded for all $i,j,k,l$.

For these bounded continuous functions, there exists a subsequence $\dkh{p_{k_m}}_{m=1}^\infty$ of $\subset \dkh{p_k}_{k=1}^\infty$, such that these functions  converge on this subsequence. By continuity, the limit functions satisfy all identities listed in Section 2.
\end{proof}

 Moreover, if $M$ has zero Gauss-Kronecker curvature, we establish the following Lemma.
\begin{lem}\label{lineareqs}
Let $M$ be a complete minimal hypersurface in $\mathbb H^5$ with constant scalar curvature and zero Gauss-Kronecker curvature. Then, the limits $\hat{\lambda_i}, \hat{h_{ijk}}, \hat{h_{ijkl}}$ defined in Lemma \ref{subsequencelem} satisfy the following equations for each $1\le k\le 4$:
\begin{align}
\begin{cases}
&\sum_i \hat{h_{iik}}=0,\\
&\sum_i \hat{\lambda_i} \hat{h_{iik}}=0,\\
&\sum_i \hat{\lambda_i}^2 \hat{h_{iik}}=0,\\
&\sum_i \hat{\lambda_i}^3 \hat{h_{iik}}=0,
\end{cases}  , \label{lineareq1}
\end{align}
and for each $k$ and $l$, 
\begin{align}\label{hijkl}
\begin{cases}
&\sum_i \hat{h_{iikl}}=0,\\
&\sum_{i,j} \hat{h_{ijk}} \hat{h_{ijl}}+\sum_i\hat{\lambda_i} \hat{h_{iikl}}=0,\\
&\sum_{i}\kh{\hat{\lambda_i}^3 \hat{h_{iikl}}}+\sum_{i,j}\kh{2\hat{\lambda_i}^2 \hat{h_{ijk}} \hat{h_{ijl}}+\hat{\lambda_i} \hat{\lambda_j} \hat{h_{ijk}} \hat{h_{ijl}}}=0.
\end{cases}  
\end{align}
\end{lem}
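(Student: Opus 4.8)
The plan is to read off the asserted identities from the constancy of suitable power sums of the principal curvatures, combined with the Omori--Yau information recorded in Lemma \ref{subsequencelem}. Since $R_M=-12-\abs{A}^2$ is constant, $\abs{A}^2$ is constant, so by \eqref{f1234} we have $f_1\equiv0$ and $f_2\equiv\abs{A}^2$ is constant; moreover, by \eqref{K=0}, the hypothesis $\mathcal K\equiv0$ forces $f_4\equiv\frac{1}{2}\abs{A}^4$, again constant. The function $f_3=3\sigma_3$ need not be constant --- which is precisely why it is the one singled out in Lemma \ref{subsequencelem}. Accordingly, the four equations of \eqref{lineareq1} will be obtained from the (asymptotic) vanishing of $\nabla f_1,\nabla f_2,\nabla f_3,\nabla f_4$ along $\dkh{p_{k_m}}$, and the three equations of \eqref{hijkl} from the vanishing of $\nabla^2 f_1,\nabla^2 f_2,\nabla^2 f_4$.

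For the first derivatives, fix a point $p$ and a local orthonormal frame with $A$ diagonal at $p$, say $h_{ij}(p)=\lambda_i\delta_{ij}$. Writing $f_r=\mathrm{tr}\,A^r$ and using $h_{ijk}=\nabla_k h_{ij}$ together with cyclicity of the trace, one finds at $p$
\begin{align*}
(f_1)_{,k}=\sum_i h_{iik},\quad (f_2)_{,k}=2\sum_i\lambda_i h_{iik},\quad (f_3)_{,k}=3\sum_i\lambda_i^2 h_{iik},\quad (f_4)_{,k}=4\sum_i\lambda_i^3 h_{iik}.
\end{align*}
The left-hand sides of the first, second and fourth vanish identically (since $f_1,f_2,f_4$ are constant), while item (2) of Lemma \ref{subsequencelem} gives $\abs{\nabla f_3(p_{k_m})}\to0$, so $(f_3)_{,k}(p_{k_m})\to0$ for each $k$. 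Evaluating these relations at $p=p_{k_m}$ in a frame diagonalizing $A$ there and letting $m\to\infty$ --- the limits $\hat{\lambda_i},\hat{h_{ijk}}$ exist by Lemma \ref{subsequencelem} --- yields exactly the four equations of \eqref{lineareq1}.

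For the second derivatives, write $\nabla_k A$ and $\nabla_l\nabla_k A$ for the endomorphisms with entries $h_{ijk}$ and $h_{ijkl}$. Cyclicity of the trace gives $(f_r)_{,kl}=r\,\mathrm{tr}\kh{(\nabla_l\nabla_k A)A^{r-1}}+r\,\mathrm{tr}\kh{(\nabla_k A)\nabla_l(A^{r-1})}$, with $\nabla_l(A^{r-1})=\sum_{a+b=r-2}A^a(\nabla_l A)A^b$. For $r=1$ this is $(f_1)_{,kl}=\sum_i h_{iikl}$, so $f_1\equiv0$ gives $\sum_i\hat{h_{iikl}}=0$ in the limit. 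For $r=2$, evaluation at the diagonalizing point gives $(f_2)_{,kl}=2\sum_i\lambda_i h_{iikl}+2\sum_{i,j}h_{ijk}h_{ijl}$, and constancy of $f_2$ yields the second equation of \eqref{hijkl}. For $r=4$, using $\nabla_l(A^3)=(\nabla_l A)A^2+A(\nabla_l A)A+A^2(\nabla_l A)$ and tracing against $\nabla_k A$ at the diagonalizing point one gets $\mathrm{tr}\kh{(\nabla_k A)\nabla_l(A^3)}=2\sum_{i,j}\lambda_i^2 h_{ijk}h_{ijl}+\sum_{i,j}\lambda_i\lambda_j h_{ijk}h_{ijl}$, hence
\begin{align*}
(f_4)_{,kl}=4\sum_i\lambda_i^3 h_{iikl}+4\kh{2\sum_{i,j}\lambda_i^2 h_{ijk}h_{ijl}+\sum_{i,j}\lambda_i\lambda_j h_{ijk}h_{ijl}}.
\end{align*}
Since $f_4$ is constant this vanishes; passing to the limit along $\dkh{p_{k_m}}$ gives the third equation of \eqref{hijkl}.

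I do not expect a genuine obstacle --- the computation is elementary. The points that need care are the bookkeeping in the trace identity for $(f_4)_{,kl}$ (keeping track of which principal curvature multiplies each $h_{ijk}h_{ijl}$), and the fact that $f_3$ is \emph{not} assumed constant, so the equation $\sum_i\hat{\lambda_i}^2\hat{h_{iik}}=0$ must be extracted from the asymptotic gradient bound of Lemma \ref{subsequencelem} rather than from exact constancy. The passage to the limit is justified exactly as in Lemma \ref{subsequencelem}: at each $p_{k_m}$ the displayed identities hold in a frame diagonalizing $A$ there, and $\lambda_i,h_{ijk},h_{ijkl}$ converge along the chosen subsequence.
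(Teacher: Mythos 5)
Your proposal is correct and follows essentially the same route as the paper: the first, second and fourth equations of \eqref{lineareq1} and all of \eqref{hijkl} come from differentiating the constant quantities $f_1, f_2, f_4$ once or twice in a frame diagonalizing $A$, while the third equation of \eqref{lineareq1} is extracted from the asymptotic vanishing of $\nabla f_3$ along the Omori--Yau subsequence. Your explicit trace computations (in particular the bookkeeping giving the coefficient pattern $2\lambda_i^2+\lambda_i\lambda_j$ in $(f_4)_{,kl}$) simply spell out details the paper leaves implicit.
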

\begin{proof}
Since $H\equiv 0$, $\abs{A}^2$ and $f_4=\dfrac{1}{2}\abs{A}^4$ are constants, the first, second and fourth identities in \eqref{lineareq1} are derived by taking covariant derivative with $e_k$ for these three terms, and then passing to the limit on the subsequence defined in Lemma \ref{subsequencelem}. By item (2) of Lemma \ref{subsequencelem}, $\lim_{j\to \infty} \nabla_k f_3(p_j)=0$. Taking the limit on the subsequence yields the third identity of \eqref{lineareq1}.

Similarly, by taking the second covariant derivative with respect to $e_k$ and $e_l$ for $H, \abs{A}^2, f_4$,  and then passing to the limit on the subsequence, we obtain \eqref{hijkl}.
\end{proof}

In the subsequent discussion, when referring to the limits defined in Lemma \ref{subsequencelem} and Lemma \ref{lineareqs}, we will omit the hat symbol `` $\hat{\ }$ '' for simplicity.

We now proceed to present the proof of the main theorem.

\begin{proof}[Proof of Theorem \ref{mainthm}]
Suppose   $M$ is not totally geodesic. Then, $\abs{A}$ is a positive constant. Note that the Gauss-Kronecker curvature $\mathcal{K}\equiv 0$ implies
$f_4=\dfrac{1}{2}\abs{A}^4$ is also a constant. Consequently, \eqref{Deltaf4} simplifies to
\begin{align}\label{Deltaf4-1}
2\mathscr{A}+\mathscr{B}=\dfrac{1}{2}\kh{4+\abs{A}^2}\abs{A}^4.
\end{align}
The condition $\mathcal{K}\equiv 0$ further implies 
 that at least one of the principal curvatures vanishes at each point. By Lemma \ref{algebraiclem}, we have
\begin{align}\label{infsupf_3}
-\dfrac{1}{\sqrt{6}}\abs{A}^3 \le \inf f_3 \le \sup f_3 \le \dfrac{1}{\sqrt{6}} \abs{A}^3.
\end{align}
Based on this observation, we divide the proof into three cases:
\\
{\it Case} (1): $\sup f_3 =\dfrac{1}{\sqrt{6}}\abs{A}^3$.\\
In this case, by Lemma \ref{algebraiclem} and by taking limits on the subsequence in Lemma \ref{subsequencelem}, the limits of the principal curvatures can be expressed as
\begin{align}
\lambda_1=\lambda_2=-\lambda<0=\lambda_3<\lambda_4=2\lambda>0.\label{lambda1multi2}
\end{align}
Solving the linear equation \eqref{lineareq1}, we obtain
\begin{align}\label{h11kh22k}
h_{11k}+h_{22k}=0, \ \ h_{44k}=h_{33k}=0, \ \ \text{for each}\ k.
\end{align}
Consequently, the squared norm of the covariant derivative of  $A$ and the quantities $\mathscr{A}$ and $\mathscr{B}$ are given by
\begin{align*}
\abs{\nabla A}^2=&4\kh{h_{111}^2+h_{112}^2}+6\kh{h_{113}^2+h_{114}^2+h_{123}^2+h_{124}^2+h_{134}^2+h_{234}^2}.\\
\mathscr{A}=&\sum_{i,j,k}\lambda_i^2 h_{ijk}^2\\
=&\lambda^2\sum_{j,k}\kh{h_{1jk}^2+h_{2jk}^2+4h_{4jk}^2}\\
=&\lambda^2\zkh{4\kh{h_{111}^2+h_{112}^2+h_{113}^2+h_{123}^2}+10\kh{h_{234}^2+h_{134}^2}+12\kh{h_{114}^2+h_{124}^2}}.\\
\mathscr{B}=&\sum_{i,j,k}\lambda_i\lambda_jh_{ijk}^2\\
=&\lambda^2\sum_k\kh{h_{11k}^2+h_{22k}^2+2h_{12k}^2-4h_{14k}^2-4h_{24k}^2}\\
=&\lambda^2\zkh{2\kh{h_{113}^2+h_{123}^2}+4\kh{h_{111}^2+h_{112}^2-h_{134}^2-h_{234}^2}-6\kh{h_{114}^2+h_{124}^2} }.
\end{align*}
Combining these expressions   with  \eqref{nablaA} and  \eqref{Deltaf4-1}, we derive,
\begin{align*}
&3\zkh{4\kh{h_{111}^2+h_{112}^2}+6\kh{h_{113}^2+h_{114}^2+h_{123}^2+h_{124}^2+h_{134}^2+h_{234}^2}}\\
=&
3\abs{\nabla A}^2
=3\abs{A}^2\kh{\abs{A}^2+4}
=\dfrac{1}{\lambda^2}\kh{2\mathscr{A}+\mathscr{B}}\\
=&12\kh{h_{111}^2+h_{112}^2}+10\kh{h_{113}^2+h_{123}^2}+16\kh{h_{234}^2+h_{134}^2}+18\kh{h_{114}^2+h_{124}^2},
\end{align*}
which implies
\begin{align*}
0=8\kh{h_{113}^2+h_{123}^2}+2\kh{h_{134}^2+h_{234}^2}.
\end{align*}
Consequently, we have
\begin{align}\label{h123h234}
h_{113}=h_{123}=h_{134}=h_{234}=0.
\end{align}
Substituting \eqref{lambda1multi2}, \eqref{h11kh22k} and  \eqref{h123h234} into \eqref{hijkl}   for $k= l=3$, the equations  reduce to
\begin{align*}
\begin{cases}
&h_{1133}+h_{2233}+h_{3333}+h_{4433}=0,\\
&-\lambda h_{1133}-\lambda h_{2233}+2\lambda h_{4433}=0,\\
&-\lambda^3 h_{1133}-\lambda^3 h_{2233}+8\lambda^3 h_{4433}=0,
\end{cases}
 \end{align*}
which implies $h_{4433}=0$.
On the other hand, for $k=l=4$, \eqref{hijkl} becomes 
\begin{align*}
&\begin{cases}
&h_{1144}+h_{2244}+h_{3344}+h_{4444}=0,\\
&-\lambda h_{1144}-\lambda h_{2244}+2\lambda h_{4444}=-2\kh{h_{114}^2+h_{124}^2},\\
&-\lambda^3 h_{1144}-\lambda^3 h_{2244}+8\lambda^3 h_{4444}=-6\lambda^2 \kh{h_{114}^2+h_{124}^2},
\end{cases}
\end{align*}
which implies $h_{3344}=0$. Consequently, 
\begin{align*}
h_{3344}-h_{4433}=0.
\end{align*}
However, from \eqref{tijkl} and \eqref{lambda1multi2}, we have 
\begin{align*}
h_{3344}-h_{4433}=-2\lambda<0.
\end{align*}
This leads to a contradiction.
\\
{\it Case} (2): $\sup f_3 =-\dfrac{1}{\sqrt{6}} \abs{A}^3$. \\
 In this case, by the inequality \eqref{infsupf_3}, we conclude that $f_3\equiv -\dfrac{1}{\sqrt{6}}\abs{A}^3$ is a constant. Consequently, $f_1=4H=0$, $f_2=\abs{A}^2$, $f_3$ and $f_4=\dfrac{1}{2}\abs{A}^4$ are all constants. This implies that the principal curvatures $\lambda_1, \lambda_2, \lambda_3, \lambda_4$ are constants, and thus $M$ is isoparametric. However, any isoparametric minimal hypersurface in $\mathbb H^5$ must be totally geodesic (cf. \cite[p. 86]{BerConOlm03}), which contradicts our assumption that $M$ is not totally geodesic.\\
{\it Case }(3): $-\dfrac{1}{\sqrt{6}}\abs{A}^3 < \sup f_3 < \dfrac{1}{\sqrt{6}}\abs{A}^3$.\\ To derive a contradiction, we analyze this case by dividing it into three subcases:
\begin{itemize}
\item[(3-i)]: $-\dfrac{1}{\sqrt{6}}\abs{A}^3 < \sup f_3 <0$. In this subcase, $f_3<0$. Consequently, by \eqref{f1234},  Lemma \ref{algebraiclem} and Lemma \ref{subsequencelem}, the limits of the principal curvatures can be set as
\begin{align*}
\lambda_1<\lambda_2=0<\lambda_3<\lambda_4.
\end{align*} 
Solving the linear equation \eqref{lineareq1}, we find
\begin{align*}
h_{iik}=0, \ \ \text{for each}\ i, k.
\end{align*}
Thus, 
\begin{align*}
\mathscr{C}=&\sum_{i,j,k} \lambda_i h_{ijk}^2
=\dfrac{1}{3}\sum_{i,j,k}\kh{\lambda_i+\lambda_j+\lambda_j} h_{ijk}^2\\
=& \dfrac{1}{3} \kh{\lambda_1+\lambda_3+\lambda_4} h_{134}^2=-\dfrac{1}{3}\lambda_2h_{134}^2=0.
\end{align*}
By Lamma \ref{subsequencelem} item (3), \eqref{Deltaf3} becomes
\begin{align*}
0\ge \Delta f_3=-3\kh{\abs{A}^2+4}f_3+\mathscr{C}=-3\kh{\abs{A}^2+4}f_3>0.
\end{align*}
This leads to a contradiction.

\item[(3-ii)]: $ 0 < \sup f_3 < \dfrac{1}{\sqrt{6}}\abs{A}^3$. In this subcase, after taking the limit on the subsequence in Lemma \ref{subsequencelem}, we have $f_3>0$. Therefore, by \eqref{f1234},  Lemma \ref{algebraiclem} and Lemma \ref{subsequencelem}, the limits of the principal curvatures can be expressed as
\begin{align}\label{4principalcurvature}
\lambda_1=-\lambda<\lambda_2=-\mu<\lambda_3=0<\lambda_4=\lambda+\mu.
\end{align} 
Solving the linear equation \eqref{lineareq1}, we also obtain
\begin{align*}
h_{iik}=0, \ \ \text{for each}\ i, k.
\end{align*}
In this subcase, 
\begin{align*}
\abs{\nabla A}^2=6\kh{h_{123}^2+h_{124}^2+h_{134}^2+h_{234}^2}.
\end{align*}
Therefore, from \eqref{ABC} and \eqref{nablaA}, we derive,
\begin{align*}
\mathscr{A}=&\sum_{i,j,k}\lambda_i^2h_{ijk}^2
=\dfrac{1}{3}\sum_{i,j,k}\kh{\lambda_i^2+\lambda_j^2+\lambda_k^2}h_{ijk}^2\\
=&2\kh{h_{123}^2\kh{\abs{A}^2-\lambda_4^2}+h_{124}^2\kh{\abs{A}^2-\lambda_3^2}}\\
&+2\kh{h_{134}^2\kh{\abs{A}^2-\lambda_2^2}+h_{234}^2\kh{\abs{A}^2-\lambda_1^2}}\\
=&\dfrac{1}{3}\abs{A}^4\kh{\abs{A}^2+4}-2C,
\end{align*}
where 
\begin{align*}
C=\lambda_1^2h_{234}^2+\lambda_2^2h_{134}^2 +\lambda_3^2h_{124}^2+\lambda_4^2h_{123}^2.
\end{align*}
\begin{align*}
\mathscr{B}=&\sum_{i,j,k}\lambda_i\lambda_jh_{ijk}^2
=\dfrac{1}{3}\sum_{i,j,k}\kh{\lambda_i\lambda_j+\lambda_j\lambda_k+\lambda_k\lambda_i}h_{ijk}^2\\
=&\dfrac{1}{3}\sum_{i,j,k}\kh{\dfrac{1}{2}\kh{\lambda_i+\lambda_j+\lambda_k}^2-\dfrac{1}{2}\kh{\lambda_i^2+\lambda_j^2+\lambda_k^2}}h_{ijk}^2\\
=&-\dfrac{1}{6}\abs{A}^4\kh{\abs{A}^2+4}+2C.
\end{align*}
Combining the above two formulas with \eqref{Deltaf4-1}, we obtain
\begin{align*}
0=- \dfrac{1}{2}\kh{4+\abs{A}^2}\abs{A}^4 +2\mathscr{A}+\mathscr{B}
=-2C.
\end{align*}
Consequently,
\begin{align*}
h_{123}=h_{134}=h_{234}=0.
\end{align*}
Thus, we have $\abs{\nabla A}^2 =6h_{124}^2$. Then,
taking $k=l=1,2,4$ respectively in \eqref{hijkl}, a direct computation yields,
\begin{align*}
\begin{cases}
&h_{11kk}+h_{22kk}+h_{33kk}+h_{44kk}=0,\\
&-\lambda h_{11kk}-\mu h_{22kk}+\kh{\lambda+\mu} h_{44kk}=-\dfrac{1}{3}\abs{\nabla A}^2,\\
&-\lambda^3 h_{11kk}-\mu^3 h_{22kk}+\kh{\lambda+\mu}^3 h_{44kk}=-\dfrac{1}{6}\abs{A}^2\abs{\nabla A}^2.
\end{cases}
 \end{align*}
 Solving the above equations, we obtain 
 \begin{align*}
h_{33kk}=0, \ \ \text{for} \ \ k=1,2,4.
\end{align*}
Taking $k=l=3$ in \eqref{hijkl}, we obtain 
\begin{align*}
\begin{cases}
&h_{1133}+h_{2233}+h_{3333}+h_{4433}=0,\\
&-\lambda h_{1133}-\mu h_{2233}+\kh{\lambda+\mu} h_{4433}=0,\\
&-\lambda^3 h_{1133}-\mu^3 h_{2233}+\kh{\lambda+\mu}^3 h_{4433}=0,
\end{cases}
 \end{align*}
Solving the above equations, we get
\begin{align}\label{hii33}
h_{1133}=\dfrac{\lambda+2\mu}{\mu-\lambda}h_{4433}.
\end{align}
On the other hand, by \eqref{tijkl}, we obtain
\begin{align*}
h_{1133}=&h_{3311}+\kh{\lambda_1-\lambda_3}\kh{-1+\lambda_1\lambda_3}=\lambda.\\
h_{4433}=&h_{3344}+\kh{\lambda_4-\lambda_3}\kh{-1+\lambda_4\lambda_3}=-\kh{\lambda+\mu}.
\end{align*}
Substituting the above equalities into  \eqref{hii33}, we obtain,
\begin{align*}
\lambda=-\dfrac{\lambda+2\mu}{\mu-\lambda}(\lambda+\mu).
\end{align*}
We finally arrive at $\mu^2+2\lambda\mu=0$, which contradicts with $\lambda>\mu>0$.
\item[(3-iii)]: $\sup f_3 =0.$  In this subcase, to derive a contradiction, we further analyze the following three possibilities separately.
\begin{itemize}
\item[(3-iii-a)]: $\inf f_3=0.$ In this subcase, we have $f_3\equiv 0$. Similar to Case (2), $M$ is isoparametric,  leading to a contradiction.
\item[(3-iii-b)]: $-\dfrac{1}{\sqrt{6}}\abs{A}^3<\inf f_3<0$. 
In this subcase, after taking the limit on the subsequence in Lemma \ref{subsequencelem}, we have $f_3<0$. Therefore, by \eqref{f1234},  Lemma \ref{algebraiclem} and Lemma \ref{subsequencelem}, the limits of the principal curvatures can be expressed as
\begin{align*}
\lambda_1<\lambda_2=0<\lambda_3<\lambda_4,
\end{align*} 
i.e., the limits of the principal curvatures are four distinct numbers. 
Solve the linear equation \eqref{lineareq1}, we also obtain
\begin{align*}
h_{iik}=0, \ \ \text{for each}\ i, k.
\end{align*}
The remaining argument is similar to Case (3-ii).
\item[(3-iii-c)]: $\inf f_3 =-\dfrac{1}{\sqrt{6}}\abs{A}^3$. In this subcase, by Lemma \ref{algebraiclem} and by taking limits on the subsequence in Lemma \ref{subsequencelem}, the limits of the principal curvatures can be set as
\begin{align*}
\lambda_1=-2\lambda<0=\lambda_2<\lambda_3=\lambda_4=\lambda.
\end{align*}
Solve the linear equation \eqref{lineareq1}, we have
\begin{align*}
h_{11k}=h_{22k}=0, \ \ h_{33k}+h_{44k}=0, \ \ \text{for each}\ k.
\end{align*}
The remaining proof is similar to Case (1).
\end{itemize}
\end{itemize}

This completes the proof.
\end{proof}

\vspace{1cm}

\providecommand{\bysame}{\leavevmode\hbox to3em{\hrulefill}\thinspace}
\providecommand{\MR}{\relax\ifhmode\unskip\space\fi MR }


\end{document}